\theoremstyle{plain} 
\newtheorem{theorem}[equation]{Theorem}
\newtheorem{lemma}[equation]{Lemma}
\numberwithin{equation}{subsection}
\newtheorem{question}[equation]{Question}
\newtheorem{remark}[equation]{Remark}
\newcommand{\lmoh}{\lambda^{-\frac{1}{2}}}
\newcommand{\loh}{\lambda^{\frac{1}{2}}}
\begin{document}

\title[Average $L^q$ growth and nodal sets of eigenfunctions on surfaces]{Average $L^q$ growth and nodal sets of eigenfuntions of the Laplacian on surfaces}
\author{Guillaume Roy-Fortin}
\thanks{The author has been supported by NSERC}

\begin{abstract}
In \cite{RF}, we exhibit a link between the average local growth of Laplace eigenfunctions on surfaces and the size of their nodal set. In that paper, the average local growth is computed using the uniform - or $L^\infty$ - growth exponents on disks of wavelength radius. The purpose of this note is to prove similar results for a broader class of $L^q$ growth exponents with $q \in (1, \infty)$. More precisely, we show that the size of the nodal set is bounded above and below by the product of the average local $L^q$ growth with the frequency. We briefly discuss the relation between this new result and Yau's conjecture on the size of nodal sets. 
\end{abstract}

\maketitle

\section{Growth and nodal sets of Laplace eigenfunctions}
Let $(M,g)$ be a smooth, compact, connected two-dimensional Riemannian manifold without boundary endowed with a $C^\infty$ metric $g$. Let $\{\phi_{\lambda}\}$, $\lambda \nearrow \infty$, be any sequence of $L^2$ normalized eigenfunctions of the negative definite Laplace-Beltrami operator $\Delta_g$: 
\begin{equation}\label{main_eigenvalue_eq}
\Delta_g \phi_\lambda + \lambda \phi_\lambda = 0.
\end{equation}

\subsection{Nodal set}
The nodal set of an eigenfunction $\phi_\lambda$ is $$Z_\lambda = \left\{ p \in M : \phi_\lambda (p) = 0 \right\},$$ whose one dimensional Hausdorff measure we denote by $\mathcal{H}^1(Z_\lambda)$. The nodal set is a smooth curve away from the finite singular set $$S_\lambda = \left\{ p \in Z_\lambda : \nabla \phi_\lambda(p) = 0 \right\},$$ which is known to be finite in our current setting (see \cite{Ch, B}). It has been conjectured by Yau \cite{Ya1, Ya2} that the size of the nodal set grows like the frequency, namely that there exist positive constants $c,C$ such that: 
\begin{equation}\label{Yau}
c \sqrt{\lambda} \leq \mathcal{H}^1(Z_\lambda) \leq C  \sqrt{\lambda}.
\end{equation}

\begin{remark} The conjecture of Yau has been formulated for any n-dimensional compact, smooth manifold and has been proved by Donnelly and Fefferman in \cite{DF1} in the case where $(M,g)$ is a real analytic manifold with real analytic metric. We will however only consider the case $n=2$ from here on.
\end{remark} 

For smooth surfaces, the conjectured lower bound has been proved in \cite{Br} by Br{\"u}ning and also by Yau (unpublished). The current best upper bound of 
\begin{equation}\label{bestUB_YauSurface}
\mathcal{H}^1(Z_\lambda) \leq C  \lambda^{\frac{3}{4}}
\end{equation}
is due to Donnelly-Fefferman \cite{DF2} and Dong \cite{D}. 

\subsection{Local growth}

Given a continuous function $f$ on a ball $B$ and a scaling factor $0 < \alpha < 1$, one can measure the local growth of $f$ by defining the \textit{$L^q$ growth exponent} of $f$ on $B$ by $$\beta^q_\alpha(f; B) := \log \frac{|| f ||_{L^q(B)}}{|| f ||_{L^q(\alpha B)}},$$ where $\alpha B$ is the ball concentric to $B$ with radius shrank by a factor $\alpha$. This quantity can be thought of as a generalized degree for $f$. Indeed, the following basic example illustrates that the $L^q$ growth exponent of a polynomial is nothing but its degree up to constants: 
$$\beta^q_\alpha(x^n; [-1,1]) = q^{-1} \log \frac{ \int_{-1}^1 |x|^{qn} dx }{\int_{-\alpha}^\alpha |x|^{qn} dx } = q^{-1} \log \frac{1}{\alpha^{qn+1}} = c(\alpha) (n + q^{-1}).$$
We now define growth exponents at small scale for the eigenfunctions $\phi_\lambda$. Once again, fix a scaling factor $0 < \alpha < 1$ and write $B_{r_\lambda}(p)$ for the ball of radius 
\begin{equation}\label{eq_k0}
r_\lambda = k_0 \lmoh
\end{equation}
centred at $p \in M$. Here, $k_0$ is a small constant whose value will be determined later. For $q \in [1, \infty]$, we define 
$$\beta^q_\alpha(\lambda,p) := \beta^q_\alpha(\phi_\lambda; B_{r_\lambda}(p)).$$
These growth exponents are a measure of the local $L^q$ growth of an eigenfunction at the wavelength scale and generalize the doubling exponents extensively used by Donnelly and Fefferman, notably in \cite{DF1}, where they prove the following estimate $$ \beta^q_\alpha(\lambda,p) \leq c \sqrt{\lambda}.$$
Note that this bound further supports the common intuition that an eigenfunction of eigenvalue $\lambda$ behaves roughly like a polynomial of degree $\sqrt{\lambda}$. For more details, we refer the reader to \cite{DF3, RF, Ze2}. The growth exponents are local by nature and we can average them to get a global quantity called the \textit{average local $L^q$ growth}: $$A^q_{\alpha}(\lambda) := \frac{1}{\text{Vol}(M)} \int_M \beta^q_\alpha(\lambda,p) \mathrm{d} V_g(p).$$ 
\section{Main result and discussion}
Our main result shows that the length of the nodal set of an eigenfunction $\phi_\lambda$ is controlled by the product of the frequency with the average local $L^q$ growth of $\phi_\lambda$. More precisely, we have

\begin{theorem}\label{CHAP2_thm1}
There exists $0 < \alpha_0 < 1$ such that the following holds for any $0 < \alpha < \alpha_0$ and $q \in (1, +\infty)$: 
$$c_1 \loh A^q_\alpha(\lambda) \leq \mathcal{H}^1(Z_\lambda) \leq c_2 \loh (A^q_\alpha(\lambda) + 1),$$
where  and $c_1, c_2$ are positive constants depending only on $q$, $\alpha$ and the geometry of $(M,g)$.
\end{theorem}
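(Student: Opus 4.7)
The strategy is to reduce the $L^q$ statement to the $L^\infty$ version proved in \cite{RF} by showing that the pointwise local $L^q$ growth exponent $\beta^q_\alpha(\lambda,p)$ is comparable, up to an additive $O(1)$ constant, to an $L^\infty$ growth exponent on the same wavelength ball (or a slightly shrunken one). The averaged form of this comparison then transfers nodal length estimates between the two regimes.

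The key comparison rests on the fact that on a wavelength ball $B = B_{r_\lambda}(p)$, the eigenfunction $\phi_\lambda$ obeys an elliptic equation with small coefficient, so one has the reverse H\"older inequality
$$\|\phi_\lambda\|_{L^\infty(sB)} \leq C_{s,t,q}\, r_\lambda^{-2/q}\,\|\phi_\lambda\|_{L^q(tB)}, \qquad 0 < s < t \leq 1,$$
provided the constant $k_0$ in \eqref{eq_k0} is taken small enough; this is standard Moser iteration applied to \eqref{main_eigenvalue_eq}. Combined with the trivial H\"older bound $\|\phi_\lambda\|_{L^q(tB)} \leq C\,r_\lambda^{2/q}\,\|\phi_\lambda\|_{L^\infty(tB)}$ applied on both the outer and inner balls, a direct manipulation shows that for $\alpha$ small and $s \in (\alpha,1)$ fixed,
$$\bigl|\beta^q_\alpha(\phi_\lambda;B) - \beta^\infty_{\alpha/s}(\phi_\lambda;sB)\bigr| \leq C(\alpha,s,q,(M,g)).$$
Averaging in $p$ over $M$ gives the corresponding estimate for the averaged quantities.

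With this comparison in hand, I would invoke the $L^\infty$ theorem from \cite{RF}, applied on wavelength balls of radius $s r_\lambda = (sk_0)\lmoh$ (themselves valid wavelength balls with a smaller constant), to get
$$c_1'\loh\,A^\infty_{\alpha/s}(\lambda) \leq \mathcal{H}^1(Z_\lambda) \leq c_2'\loh\,\bigl(A^\infty_{\alpha/s}(\lambda)+1\bigr),$$
where $A^\infty_{\alpha/s}(\lambda)$ now denotes the average of $\beta^\infty_{\alpha/s}(\phi_\lambda;sB_{r_\lambda}(p))$ over $p\in M$. Substituting the pointwise comparison yields the upper bound of the theorem directly, as the $O(1)$ correction is absorbed into the ``$+1$''. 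The same substitution gives $\mathcal{H}^1(Z_\lambda) \geq c_1'\loh(A^q_\alpha(\lambda) - C)$; when $A^q_\alpha(\lambda) \leq 2C$ this is vacuous, but one recovers the desired inequality via Br\"uning's unconditional lower bound $\mathcal{H}^1(Z_\lambda) \geq c\loh$, while for $A^q_\alpha(\lambda) > 2C$ one has $\mathcal{H}^1(Z_\lambda) \geq (c_1'/2)\loh\,A^q_\alpha(\lambda)$. In both cases, after adjusting constants, the lower bound $\mathcal{H}^1(Z_\lambda) \geq c_1 \loh A^q_\alpha(\lambda)$ follows.

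The main technical obstacle is keeping track of the scales: the reverse H\"older step forces the $L^\infty$ exponent to be measured on a strictly smaller ball, so $\alpha$ must be small enough that the resulting shrinkage factor $\alpha/s$ still falls below the threshold imposed by the $L^\infty$ result of \cite{RF}. This, together with the smallness of $k_0$ required for Moser iteration to apply uniformly across all wavelength balls on $(M,g)$, determines the threshold $\alpha_0$ in the statement.
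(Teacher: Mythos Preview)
Your overall strategy---reduce the $L^q$ growth exponents to $L^\infty$ ones via a reverse H\"older inequality on wavelength balls, then invoke the $L^\infty$ result of \cite{RF}---is exactly the paper's approach. The paper carries out the reduction at the \emph{local} level (after passing to conformal coordinates and rescaling to a unit-scale Schr\"odinger equation with small potential), proving $L^q$ analogues of the two key local theorems of \cite{RF} and then re-running the global integral-geometric argument; you instead compare $A^q_\alpha$ to an $L^\infty$ average directly and use the main theorem of \cite{RF} as a black box. Both organizations are fine. (The paper obtains the reverse H\"older estimate via the Green--Poisson representation in the plane rather than Moser iteration, but this is immaterial.)

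There is, however, a genuine slip in your claimed two-sided comparison
\[
\bigl|\beta^q_\alpha(\phi_\lambda;B) - \beta^\infty_{\alpha/s}(\phi_\lambda;sB)\bigr| \leq C.
\]
With your choice of radii the inequality $\beta^\infty_{\alpha/s}(\phi_\lambda;sB) \leq \beta^q_\alpha(\phi_\lambda;B) + C$ does follow (reverse H\"older on the outer pair $sB\subset B$, trivial bound on the common inner ball $\alpha B$), and this suffices for the upper bound. But the reverse inequality, which you need for the lower bound, would require $\|\phi_\lambda\|_{L^q(B)} \leq C\,r_\lambda^{2/q}\,\|\phi_\lambda\|_{L^\infty(sB)}$ with $sB\subsetneq B$, and this is simply false in general: nothing prevents the $L^q$ mass on $B$ from concentrating in the annulus $B\setminus sB$. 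The fix is to pair the radii asymmetrically in the two directions. For the lower bound one should use the trivial bound on the \emph{outer} ball and reverse H\"older on the \emph{inner} pair, yielding $\beta^q_\alpha(\phi_\lambda;B) \leq \beta^\infty_{\alpha'}(\phi_\lambda;B) + C$ for some $\alpha'<\alpha$, and then apply the $L^\infty$ theorem of \cite{RF} with scaling parameter $\alpha'$. This is precisely how the paper is organized: the upper and lower bounds invoke two separate local comparison theorems, one enlarging the outer disk and the other the inner disk, rather than a single symmetric estimate.
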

In our previous paper \cite{RF}, we prove the same result for $q=\infty$. We remark that the definition of the growth exponents implies that the lower bound for the length of the nodal set actually holds for every $\alpha \in (0,1)$. Indeed, as the scaling parameter $\alpha$ increases, the growth exponents become smaller. We believe that the case $q=1$ is also true, but we can not prove it with our current methods.

\subsection{Connection with the conjecture of Yau}
We recall that for smooth surfaces, the sharp lower bound conjectured by Yau for the length of the nodal set has been proved and that the current best upper bound is $\lambda^{\frac{3}{4}}$. We ask

\begin{question}\label{question1}
Is it possible to prove a polynomial (or better) upper bound for the average local $L^q$ growth of the type $$A^q_\alpha(\lambda) = O(\lambda^\delta),$$ with $\delta \in [0, \frac{1}{4}[$ and for some $q \in (1, \infty)$? 
\end{question}

Combined with Theorem \ref{CHAP2_thm1}, any such result would immediately improve the current best upper bound for the size of the nodal set on smooth surfaces. In the same spirit, we remark that another consequence of our main theorem is that the conjecture of Yau for compact surfaces is now equivalent to proving $$A^q_\alpha(\lambda) = O(1),$$ for any $q \in (1, \infty]$. Finally, for eigenfunctions of a real analytic surface, Theorem \ref{CHAP2_thm1} combined with the results of Donnelly and Fefferman \cite{DF1} imply that $A^q_\alpha(\lambda) = O(1).$ This tells us that such eigenfunctions cannot grow too fast with respect to $L^q$ norms on balls of wavelength radius, except maybe on a residual set of null measure. \\

To tackle Question \ref{question1}, we need to further understand what exactly is measured by $A^q_\alpha(\lambda)$, which we attempt to do next in the special case $q=2$.

\subsection{Remark on equidistribution of eigenfunctions and the case $q=2$}
A subsequence $\{ \phi_{j_k}\}_{k=1}^\infty$ of $L^2$-normalized eigenfunctions is \textit{equidistributed} on some set $E \subset M$ if its $L^2$-mass on $E$ converges to Vol($E$), that is, if $$ \lim_{k \rightarrow \infty} \int_E |\phi_{j_k}|^2 \mathrm{d}V = \frac{\text{Vol}(E)}{\text{Vol}(M)}.$$
Equidistribution on $M$ often arises as a consequence of the stronger quantum ergodicity property, where the eigenfunctions actually equidistribute on the phase space $S^*M$ after microlocal lifting.  For example, on surfaces with negative curvature, equidistribution holds for a density one subsequence of quantum ergodic eigenfunctions, see \cite{CdV, Sh, Ze3}. The recent papers \cite{HR} by Hezari, Rivi\`ere and \cite{H} by Han investigate quantum ergodicity of eigenfunctions at small scales on closed manifolds of negative sectional curvature. A consequence of their work is that, in such a setting, the full density subsequence of quantum ergodic eigenfunctions equidistribute on balls $B_r(p)$ of shrinking radius $r = (\log{\sqrt{\lambda}})^{-K}$. This relates to our work when $q=2$: the definition of $L^2$-growth exponents relies upon balls whose radii are also shrinking, albeit at the quicker wavelength pace $r=\lmoh$. While the current machinery does not seem to allow going beyond the inverse logarithmic regime in general, it would nevertheless be interesting to try to find specific sequences of eigenfunctions that equidistribute almost everywhere at the wavelength scale. For such sequences, the average local growth is bounded and the upper bound conjectured by Yau for the length of the nodal set would then follow from Theorem \ref{CHAP2_thm1}.\\

The aforementioned results on small scale quantum ergodicity imply that the $L^2$ growth exponents are uniformly bounded at the inverse logarithmic scale for the density one subsequence of quantum ergodic eigenfunctions on surfaces of negative curvature. It is thus natural to ask
\begin{question}
Does a result analogous to Theorem 1 hold for growth exponents measured on balls of larger scales than $r=\lmoh$ on surfaces of negative curvature? 
\end{question}

Proving such a result for $r = (\log{\sqrt{\lambda}})^{-K}$ could lead to an improvement of the upper bound for the length of the nodal set of quantum ergodic eigenfunctions. 

\subsection{Acknowledgements.}
This research was part of my Ph.D. thesis at Universit\'e de Montr\'eal under the supervision of Iosif Polterovich and I want to thank him for his always precious input and constant support. I want to thank Leonid Polterovich, Misha Sodin, John Toth and Steve Zelditch for valuable discussions related to the topics involved in this paper. 

\section{Proofs}

\subsection{Upper bound for the size of the nodal set}

The proof of the upper bound for the size of the nodal set follows that of Section 2.2 in \cite{RF}, where we replace $L^\infty$ norms of the eigenfunctions $\phi_\lambda$ by $L^q$ norms throughout. In that section, the eigenfunction is locally represented by a function $F$ which solves a planar Schr\"{o}dinger equation with small potential. This is done by restricting the eigenfunction to a small ball within a conformal coordinate patch. We then use \cite[Theorem 2.1.1]{RF} which suitably relates the $L^\infty$ growth exponent of $F$ with the size of its nodal set and conclude using a integral geometric argument. Thus, we only need the following $L^q$ analogue of that theorem:

\begin{theorem}\label{theorem2}

Let $F: 3\mathbb{D} \rightarrow \mathbb{R}$ be a solution of 
\begin{equation}\label{eq_schroedinger_ev_3D}
\Delta F + pF = 0, 
\end{equation}
with the potential $p \in C^{\infty}(3\mathbb{D})$ satisfying $||p||_{L^\infty(3 \mathbb{D})}  < \epsilon_0$. Let $q \in (1, \infty)$ and let $$\beta^q := \log \frac{||F||_{L^q(\frac{11}{4} \mathbb{D})}  }{||F||_{L^q(\frac{1}{4} \mathbb{D})} }.$$ Finally, denote by $Z_F$ the nodal set $\{ z \in 3 \mathbb{D} : F(z) = 0 \}$ of $F$. Then,
\begin{equation*}
\mathcal{H}^1 \left( Z_F \cap \frac{1}{60}\mathbb{D} \right) \leq c_3 \beta^*,
\end{equation*}
where $\beta^* := \max\{ \beta^q, 1 \}$ and $c_3 = c_3(q)$ is a positive constant.
\end{theorem}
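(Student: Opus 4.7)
The plan is to deduce Theorem~\ref{theorem2} from its $L^\infty$ counterpart, Theorem 2.1.1 of~\cite{RF}, by using interior elliptic regularity to compare $L^q$ and $L^\infty$ growth exponents of $F$ on appropriately nested disks.

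Let $a\mathbb{D} \supset b\mathbb{D}$ denote the two concentric disks on which Theorem 2.1.1 of~\cite{RF} measures the $L^\infty$ growth exponent
$$\beta^\infty := \log \frac{\|F\|_{L^\infty(a\mathbb{D})}}{\|F\|_{L^\infty(b\mathbb{D})}}.$$
The radii $\tfrac{11}{4}$ and $\tfrac{1}{4}$ in the hypothesis of Theorem~\ref{theorem2} appear to have been chosen precisely so that $\tfrac{1}{4} \leq b < a \leq \tfrac{11}{4}$, leaving a definite gap on either side in which to execute an interior regularity estimate. Granted this, Theorem 2.1.1 of~\cite{RF} immediately gives
$$\mathcal{H}^1\bigl(Z_F \cap \tfrac{1}{60}\mathbb{D}\bigr) \leq c\, \max\{\beta^\infty, 1\},$$
so it suffices to establish the comparison $\beta^\infty \leq C_q\, \beta^q + C'_q$.

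This comparison has two halves. Since $F$ solves $\Delta F = -pF$ with $\|p\|_{L^\infty} < \epsilon_0$, the classical local maximum principle (e.g.\ Gilbarg--Trudinger, Theorem 8.17) applied on a disk strictly between $a\mathbb{D}$ and $\tfrac{11}{4}\mathbb{D}$ yields
$$\|F\|_{L^\infty(a\mathbb{D})} \leq C_1(q, a, \epsilon_0)\, \|F\|_{L^q(\frac{11}{4}\mathbb{D})}.$$
For the lower half, the inclusion $\tfrac{1}{4}\mathbb{D} \subset b\mathbb{D}$ together with H\"older's inequality gives the trivial bound
$$\|F\|_{L^q(\frac{1}{4}\mathbb{D})} \leq \bigl|\tfrac{1}{4}\mathbb{D}\bigr|^{1/q}\, \|F\|_{L^\infty(b\mathbb{D})}.$$
Taking the logarithm of the ratio of these two inequalities gives $\beta^\infty \leq \beta^q + \log C_1 + \tfrac{1}{q}\log|\tfrac{1}{4}\mathbb{D}|^{-1}$. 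Since $\beta^q \geq 0$ (the outer disk contains the inner one), the additive constant is absorbed into $\beta^* = \max\{\beta^q, 1\}$, yielding $\max\{\beta^\infty, 1\} \leq C_q\, \beta^*$ and hence the theorem with $c_3(q) = c\, C_q$.

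The only non-trivial analytic ingredient is the interior $L^q \to L^\infty$ estimate, which is classical and uniform in potentials with $\|p\|_{L^\infty} < \epsilon_0$. I thus anticipate the one real obstacle to be bookkeeping: if the disks in Theorem 2.1.1 of~\cite{RF} do not in fact sit inside $[\tfrac{1}{4}, \tfrac{11}{4}]$ with the required room on each side, one would need to insert a finite chain of intermediate disks and iterate the interior estimate, at the cost of inflating the constant $c_3(q)$ but without changing the structure of the argument.
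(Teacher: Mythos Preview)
Your proposal is correct and follows essentially the same route as the paper: reduce to Theorem~2.1.1 of~\cite{RF} (whose radii are indeed $a=\tfrac{5}{2}$ and $b=\tfrac{1}{4}$) by combining an interior $L^q\to L^\infty$ estimate on the outer disk with the trivial $L^q\le |\cdot|^{1/q}\,L^\infty$ bound on the inner one. The only difference is that the paper supplies the $L^q\to L^\infty$ estimate via a self-contained lemma based on the Green--Poisson representation of $F$, while you invoke the classical local maximum principle; the paper itself remarks that this is a valid alternative.
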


The explicit value of the small positive constant $\epsilon_0$ comes from the proof. Choosing $k_0$ small enough in Equation \ref{eq_k0} ensures that the restriction of eigenfunctions to balls of radius $r=k_0 \lmoh$ within a conformal patch will give rise to a family of functions $F$ that all satisfy the hypotheses of Theorem \ref{theorem2}. Also, remark that the radius $\frac{11}{4}$ of the bigger disk for the new $L^q$ growth exponent is slightly larger than the corresponding disk in the $L^\infty$ growth exponent, whose radius is $\frac{5}{2}.$ These values are arbitrary and this does not affect the global argument. We first start with another lemma, which allows to bound the $L^\infty$ norm of $F$ on a disk by its $L^q$ norm on a slightly larger one. 

\begin{lemma}\label{CHAP2_lemma2}
Let $F: r_0 \mathbb{D} \rightarrow \mathbb{R}$ be a solution of $\Delta F + p F = 0$ on $r_0 \mathbb{D}$, where $r_0 > 0$ is a fixed radius and $p \in C^\infty(r_0\mathbb{D})$ is a small potential which satisfies $ ||p||_{L^\infty(r_0 \mathbb{D})} < \epsilon_0$. Let $q \in (1, \infty)$ and consider the following radii $0 < r^- < r^+ \leq r_0$. Then, $$||F||_{L^\infty \left(r^- \mathbb{D} \right)} \leq c_4  ||F||_{L^q\left(r^+ \mathbb{D}\right)}, $$ where $c_4$ is a positive constant that depends on the choice of $r^-, r^+$ and the exponent $q$. 
\end{lemma}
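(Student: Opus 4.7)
The lemma is a standard local elliptic regularity assertion, and the plan is to derive it by combining an interior Calder\'on--Zygmund estimate with a Sobolev embedding, making essential use of the fact that the ambient dimension is two. I would pick an auxiliary radius $r^- < r' < r^+$ and proceed in two steps, passing to progressively smaller concentric disks.

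First, I apply the interior $L^q$-based elliptic regularity estimate to $F$ on the nested disks $r^-\mathbb{D} \subset r'\mathbb{D}$, namely
$$\|F\|_{W^{2,q}(r^-\mathbb{D})} \leq K_1\bigl(\|\Delta F\|_{L^q(r'\mathbb{D})} + \|F\|_{L^q(r'\mathbb{D})}\bigr),$$
where $K_1$ depends on $q$ and on the gap $r' - r^-$. Using the equation $\Delta F = -pF$ together with $\|p\|_{L^\infty} < \epsilon_0$, the first term on the right is controlled by $\|pF\|_{L^q(r'\mathbb{D})} \leq \epsilon_0 \|F\|_{L^q(r'\mathbb{D})}$, so I end up with
$$\|F\|_{W^{2,q}(r^-\mathbb{D})} \leq K_1(1+\epsilon_0)\|F\|_{L^q(r^+\mathbb{D})}.$$

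Second, I invoke the Sobolev embedding $W^{2,q}(r^-\mathbb{D}) \hookrightarrow L^\infty(r^-\mathbb{D})$, which in the planar setting is available precisely when $kq = 2q > 2 = n$, i.e.\ when $q > 1$ --- matching the hypothesis of the lemma exactly. This yields
$$\|F\|_{L^\infty(r^-\mathbb{D})} \leq K_2 \|F\|_{W^{2,q}(r^-\mathbb{D})},$$
and chaining the two inequalities gives the claim with $c_4 = K_1 K_2(1+\epsilon_0)$. I do not anticipate a real obstacle: the only thing to track carefully is the dependence of $c_4$ on the gap $r^+ - r^-$, which enters through $K_1$ and is harmless since the radii are fixed. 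It is worth noting that both ingredients degenerate exactly at $q = 1$: the Calder\'on--Zygmund estimate is known to fail at that endpoint, and simultaneously $W^{2,1}$ no longer embeds into $L^\infty$ in dimension two. This is consistent with the author's stated inability to reach the case $q=1$ with the present methods.
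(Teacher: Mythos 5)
Your proof is correct, and it is a genuinely different route from the one taken in the paper. The paper avoids elliptic regularity machinery entirely: it represents $F$ on each disk $\rho\mathbb{D}$ as the sum of a Green potential involving $pF$ and a Poisson integral of boundary data, applies H\"older's inequality to both terms using the conjugate exponent $q' = q/(q-1)$, averages over the radius $\rho \in [\tilde{r}, r^+]$ to convert the boundary integral into an area integral over an annulus, and then absorbs the small term involving $\sup_{r^-\mathbb{D}}|F|^q$ into the left-hand side by choosing $\epsilon_0$ small. Your approach instead chains the interior Calder\'on--Zygmund estimate $\|F\|_{W^{2,q}(r^-\mathbb{D})} \lesssim \|\Delta F\|_{L^q} + \|F\|_{L^q}$ with the Sobolev embedding $W^{2,q} \hookrightarrow L^\infty$ in two dimensions, which holds precisely for $q > 1$. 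Both routes land on the same conclusion with constants depending only on $q$ and the gap between the radii. In fact the author acknowledges the elliptic-theory alternative in the remark following the lemma, saying it works ``for $q \in [2,\infty)$'' and ``in higher dimension to the cost of being more complicated''; your version actually covers the full range $q \in (1,\infty)$ via the sharp Sobolev threshold, so if anything your observation is slightly stronger than the parenthetical claim in the paper. The trade-offs: your argument is shorter and more modular, imports well-known black boxes, and requires no smallness of $\epsilon_0$ (any finite bound on $\|p\|_\infty$ suffices, whereas the paper must take $\epsilon_0 < 1/a_5(q)$); the paper's argument is self-contained, explicit about all constants, and exposes exactly where the failure at $q=1$ occurs through the divergence of the $L^{q'}$ norm of the Green kernel. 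Your closing observation that both the Calder\'on--Zygmund estimate and the planar $W^{2,1} \hookrightarrow L^\infty$ embedding degenerate at $q=1$ is a nice parallel explanation for the endpoint obstruction the author flags.
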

\begin{proof}
The proof uses ideas from \cite[Lemma 4.9]{NPS} and generalizes \cite[Lemma 5.4.6]{RF}. The main tool is the representation of $F$ as the sum of its Green potential and Poisson integral. More precisely, for $|z| \leq r^-$ and given any fixed radius $\rho \in [\tilde{r}, r^+]$, with $\displaystyle \tilde{r} := \frac{r^- + r^+}{2}$, we have the following decomposition of F:
\begin{equation}\label{CHAP2_lemma_prop1_doub_exp_eq3}
F(z) = \iint\limits_{\rho\mathbb{D}} p(\zeta) F(\zeta) G_\rho(z,\zeta) dA(\zeta) + \int\limits_{\rho \,\mathbb{S}^1} F(\zeta)  P_\rho(z, \zeta) ds(\zeta),
\end{equation}
where $\displaystyle G_\rho(z, \zeta) = \log{\left| \frac{\rho^2 - z\bar{\zeta}}{\rho(z - \zeta)} \right|}$ and $\displaystyle P_\rho(z, \zeta) = \frac{\rho^2 -|z|^2}{|\zeta - z|^2}$.
We respectively write $I_1$ and $I_2$ for the double integral and the (line) integral above. Since $q > 1$, the convexity of $ x \mapsto x^q$ yields
\begin{equation}\label{CHAP2_lemma_prop1_doub_exp_eq4}
|F(z)|^q = | I_1 + I_2 |^q \leq 2^{q-1}( |I_1|^q + |I_2|^q),
\end{equation}
which holds for all $|z| \leq r^-$. Let $q' = \frac{q}{q-1}  < \infty$ be the conjugate exponent of $q$. By H\"{o}lder, we have
\begin{align}
|I_1|^q &\leq  \iint\limits_{\rho \mathbb{D}} |p(\zeta)|^q |F(\zeta)|^q dA(\zeta)  \left( \;\; \iint\limits_{\rho \mathbb{D}} |G_\rho(z,\zeta)|^{q'} dA(\zeta)\right)^{q-1}\notag\\
&\leq a_1(q) \iint\limits_{\rho \mathbb{D}}  |p(\zeta)|^q |F(\zeta)|^q dA(\zeta) \leq a_1(q) ||p||_{\infty}^q  \iint\limits_{\rho \mathbb{D}} |F(\zeta)|^q dA(\zeta) \notag\\
&\leq a_1(q)\, \epsilon_0  \iint\limits_{r^+ \mathbb{D}} |F(\zeta)|^q dA(\zeta), \label{CHAP2_lemma_prop1_doub_exp_eq5}
\end{align} In the above, we have bounded the $L^{q'}$ norm of the Green function by $$ a_1(q) =  \sup_{|z| \leq r^-}  \left ( \;\, \iint\limits_{r^+ \mathbb{D}} \left( \log \frac{(r^+)^2 + |z\bar{\zeta}|}{\tilde{r}|z - \zeta|} \right)^{q'} dA(\zeta) \, \right)^{q-1}.$$ Note as well that we assumed $\epsilon_0 < 1$ without loss of generality. The actual size of $\epsilon_0$ will be specified at the end of the proof. We proceed similarly for the Poisson integral and get
\begin{equation}\label{CHAP2_lemma_prop1_doub_exp_eq6}
|I_2|^q \leq \int\limits_{\rho \,\mathbb{S}^1} |F(\zeta)|^q ds(\zeta) \left( \int\limits_{\rho \,\mathbb{S}^1} |P_\rho(z,\zeta)|^{q'} ds(\zeta)\right)^{q-1} \leq a_2(q) \int\limits_{\rho \,\mathbb{S}^1} |F(\zeta)|^q ds(\zeta),
\end{equation}
with $$ a_2(q) =  \left( \; \int\limits_{r^+ \,\mathbb{S}^1}  \left( \frac{r^+}{\tilde{r} - r^-}\right)^{2q'} ds \right)^{q-1} = (2\pi r^+)^{q-1}   \left( \frac{r^+}{\tilde{r} - r^-}\right)^{2q}.$$
 The representation of $F$ in Equation $\ref{CHAP2_lemma_prop1_doub_exp_eq3}$ holds for any $|z| \leq r^-$ so that substituting Equations $\ref{CHAP2_lemma_prop1_doub_exp_eq5}$ and $\ref{CHAP2_lemma_prop1_doub_exp_eq6}$ in $\ref{CHAP2_lemma_prop1_doub_exp_eq4}$, we get:
\begin{equation*}
\sup_{|z| \leq r^-} |F|^q \leq a_3(q) \left( \epsilon_0 \iint\limits_{r^+\mathbb{D}} |F|^q dA + \int\limits_{\rho \,\mathbb{S}^1} |F|^q ds\right), \; \forall \rho \in [\tilde{r},r^+],
\end{equation*}
with $a_{3} = 2^{q-1} \max\{a_1(q), a_2(q)\}.$ Averaging over all $\rho$ yields:
\begin{align}
\sup_{|z| \leq r^-} |F|^q &\leq a_4(q) \left( \epsilon_0 \iint\limits_{r^+\mathbb{D}} |F|^q dA +\iint\limits_{\tilde{r} < |z| < r^+} |F|^q dA\right) \notag\\
&\leq a_4(q)  \left( \epsilon_0 \iint\limits_{r^- \mathbb{D}} |F|^q dA + (1 + \epsilon_0) \iint\limits_{r^- < |z| < r^+} |F|^q dA \right) \notag\\
&\leq a_5(q) \epsilon_0 \sup_{|z| \leq r^-} |F|^q + a_4(q)(1 + \epsilon_0) \iint\limits_{r^+ \mathbb{D}} |F|^q dA. \label{CHAP2_lemma_prop1_doub_exp_eq7}
\end{align}
Hence,
\begin{equation*}
(1 - a_5(q) \epsilon_0) \sup_{|z| \leq r^-} |F|^q \leq a_4(q)(1 + \epsilon_0) \iint\limits_{r^+\mathbb{D}} |F|^q dA.
\end{equation*}
It suffices to choose $\epsilon_0$ small enough so that $(1 - a_5(q) \epsilon_0)$ is positive to finally obtain
\begin{equation*}\label{CHAP2_lemma_prop1_doub_exp_eq8}
\sup_{|z| \leq r^-} |F|^q \leq \frac{a_4(q)(1 + \epsilon_0)}{1 - a_5(q) \epsilon_0} \iint\limits_{r^+ \mathbb{D}} |F|^q dA,
\end{equation*}
whence we conclude $$||F||_{L^\infty(r^- \mathbb{D})} \leq c_4 ||F||_{L^q(r^+ \mathbb{D})}.$$
\end{proof}
Let us remark here that it is also possible to prove the last result for $q \in [2, \infty)$ using classical elliptic theory, as is extensively used by Donnelly and Fefferman in \cite{DF1, DF2}, an approach which works in higher dimension to the cost of being more complicated than what we have just done here. We are now ready to give the proof of Theorem \ref{theorem2}.
\begin{proof}
We set $\displaystyle r_0 = 3, r^- = \frac{5}{2}$ and $r^+= \frac{11}{4}$ and use Lemma \ref{CHAP2_lemma2} to get $$|| F ||_{L^\infty \left(  \frac{5}{2} \mathbb{D}\right) } \leq c_4 ||F||_{L^q \left( \frac{11}{4} \mathbb{D}\right) }.$$
Also, we have that $ \displaystyle || F ||_{L^q \left(  \frac{1}{4} \mathbb{D}\right) }  \leq a_1 || F ||_{L^\infty \left(  \frac{1}{4} \mathbb{D}\right) }.$ Hence,
$$ \frac{ || F ||_{L^\infty \left(  \frac{5}{2} \mathbb{D}\right) } } { || F ||_{L^\infty \left(  \frac{1}{4} \mathbb{D}\right) } } \leq  a_2 \frac{ || F ||_{L^q \left(  \frac{11}{4} \mathbb{D}\right) } } { || F ||_{L^q \left(  \frac{1}{4} \mathbb{D}\right) } }.$$  We conclude by taking the logarithm on both sides and by using Theorem 2.1.1 of \cite{RF}.
\end{proof}
\subsection{Lower bound for the size of the nodal set}
The approach is similar to what we just did for the upper bound: we now follow the steps of Section 3.3 in \cite{RF} using $L^q$ norms instead of $L^\infty$ ones and we replace the important Theorem 3.1.1. by the following theorem

\begin{theorem}
Let $F: \overline{\mathbb{D}} \rightarrow \mathbb{R}$ be a solution of 
\begin{equation}\label{eq_schroedinger_ev_1D}
\Delta F + pF = 0,
\end{equation}
in $\mathbb{D}$ and with the potential $p \in C^{\infty}(\mathbb{D})$ satisfying $||p||_{L^\infty(\overline{\mathbb{D}})} < \epsilon_0$. Denote by $|Z_F(\mathbb{S}^1)|$ the number of zeros of $F$ on the unit circle $\mathbb{S}^1$ and let $q \in (1, \infty)$. Then, 
\begin{equation*}
\log \frac{||F||_{L^q(\rho^+ \mathbb{D})}}{||F||_{L^q(\tilde{\rho}^- \mathbb{D})}} \leq c_5(1 + |Z_F(\mathbb{S}^1)|),
\end{equation*}
where $0 < \tilde{\rho}^- < \rho^+ < \frac{1}{2}$ are fixed, small radii and $c_5 = c_5(q)$ a positive constant.
\end{theorem}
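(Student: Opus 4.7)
The strategy is to sandwich the $L^q$ doubling ratio between two $L^\infty$ quantities and then invoke the $L^\infty$ analogue of this statement, namely Theorem 3.1.1 of \cite{RF}. First I would introduce an auxiliary radius $\rho^{--}$ with $0 < \rho^{--} < \tilde{\rho}^-$. Applying Lemma \ref{CHAP2_lemma2} with $r_0 = 1$ (legitimate since $F$ solves the Schrödinger equation on all of $\mathbb{D}$, and $\tilde{\rho}^- < 1/2$), $r^- = \rho^{--}$ and $r^+ = \tilde{\rho}^-$ yields
$$\|F\|_{L^\infty(\rho^{--}\mathbb{D})} \leq c_4\,\|F\|_{L^q(\tilde{\rho}^-\mathbb{D})}.$$
In the opposite direction, the trivial $L^\infty \hookrightarrow L^q$ bound on the bounded disk $\rho^+\mathbb{D}$ gives
$$\|F\|_{L^q(\rho^+\mathbb{D})} \leq \bigl(\pi(\rho^+)^2\bigr)^{1/q}\,\|F\|_{L^\infty(\rho^+\mathbb{D})}.$$

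Taking the ratio and then the logarithm of these two inequalities, I obtain
$$\log \frac{\|F\|_{L^q(\rho^+\mathbb{D})}}{\|F\|_{L^q(\tilde{\rho}^-\mathbb{D})}} \leq C(q, \rho^+, \rho^{--}) + \log \frac{\|F\|_{L^\infty(\rho^+\mathbb{D})}}{\|F\|_{L^\infty(\rho^{--}\mathbb{D})}},$$
where the additive constant depends only on $q$ and the chosen radii. Since both $\rho^{--}$ and $\rho^+$ are strictly less than $1/2$, the second term on the right is exactly the $L^\infty$ doubling exponent considered in Theorem 3.1.1 of \cite{RF}, which is bounded by $c(1 + |Z_F(\mathbb{S}^1)|)$ (after possibly shrinking $\epsilon_0$ so that the smallness hypothesis on $p$ is valid for both results simultaneously). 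Absorbing the additive constant $C(q,\rho^+,\rho^{--})$ into the ``$+1$'' on the right-hand side then produces the claimed bound.

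I do not anticipate any serious obstacle: the argument is a clean reduction to the already-established $L^\infty$ case via Lemma \ref{CHAP2_lemma2}. The only bookkeeping concerns are verifying that the specific radii $\tilde{\rho}^-<\rho^+<1/2$ fit in the window prescribed by Theorem 3.1.1 of \cite{RF}—which is the reason for the explicit hypothesis $\rho^+<1/2$ in the statement—and ensuring that the value of $\epsilon_0$ appearing in the hypothesis is taken to be the minimum of the two $\epsilon_0$'s coming from Lemma \ref{CHAP2_lemma2} and from Theorem 3.1.1 of \cite{RF}.
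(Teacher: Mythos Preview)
Your argument is correct and coincides with the paper's proof: both bound the $L^q$ ratio by an $L^\infty$ ratio using the trivial $L^\infty\hookrightarrow L^q$ bound on the larger disk and Lemma~\ref{CHAP2_lemma2} on the smaller disk, then invoke Theorem~3.1.1 of \cite{RF}. The only cosmetic difference is that the paper calls your auxiliary radius $\rho^{--}$ by the name $\rho^-$ (the radius already appearing in Theorem~3.1.1 of \cite{RF}), which is why $\tilde{\rho}^-$ is taken slightly larger than $\rho^-$ in the statement.
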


Notice that the explicit value of $\tilde{\rho}^-$ above is slightly bigger than that of $\rho^-$ in Theorem 3.1.1 of \cite{RF}, but, again, this has no effect whatsoever on the global argument.

\begin{proof}
On the one hand, we have 
\begin{equation}\label{CHAP2_lemma2_eq1}
||F||_{ L^q( \rho^+ \mathbb{D} ) } = \left( \, \int\limits_{\rho^+ \mathbb{D}} |F|^q \,\mathrm{d} A\right)^\frac{1}{q} \leq \left( \pi (\rho^+)^2 \right)^\frac{1}{q} \left( \sup_{\rho^+ \mathbb{D}} |F|^q \right)^{\frac{1}{q}} \leq ||F||_{L^\infty(\rho^+ \mathbb{D})}.
\end{equation}

On the other hand, applying Lemma \ref{CHAP2_lemma2} directly yields $$||F||_{L^\infty(\rho^- \mathbb{D})}  \leq c_4 ||F||_{L^q(\tilde{\rho}^- \mathbb{D})}.$$

It suffices to combine the last equations and invoke \cite[Theorem 3.1.1]{RF} in order to conclude the proof.
 
\end{proof}
%
%
%

\bigskip

{\scshape Department of Mathematics,
Northwestern University, 2033 Sheridan Rd., Evanston, IL
60208}

\emph{E-mail address:} \verb"gui@math.northwestern.edu"

\end{document}